\def \p{\varphi}
\def \p {\partial}
\def\O {\Omega}
\newtheorem{theorem}{Theorem}[section]
\newtheorem{remark}{Remark}[section]
\author{Andres Contreras}
\address{Science Hall 224, New Mexico State university, Department of Mathematical Sciences}
\email{acontre@nmsu.edu}
\author{Xavier Lamy}
\address{Max Planck Institute for Mathematics in the Sciences, Leipzig, Germany}
\email{xlamy@mis.mpg.de}
\author{R\'emy Rodiac}
\address{Universit\'e Paris-Est Cr\'eteil, 61 Avenue du g\'en\'eral de Gaulle, 94010 Cr\'eteil Cedex}
\email{remy.rodiac@u-pec.fr}
\title{Boundary regularity of weakly anchored harmonic maps}
\begin{document}

\begin{abstract}
In this note we study the boundary regularity of minimizers of a family of weak anchoring energies that model the states of liquid crystals. We establish optimal boundary regularity in all dimensions $n\geq 3 .$ In dimension $n=3,$ this yields full regularity at the boundary which stands in sharp contrast with the observation of boundary defects in physics works. We also show that, in the cases of weak and strong anchoring, regularity of minimizers is inherited from that of their corresponding limit problems.The analysis rests in a crucial manner on the fact that the surface and Dirichlet energies scale differently; we take advantage of this fact to reduce the problem to the known regularity of tangent maps with zero Neumann conditions.
\end{abstract}
\keywords{Harmonic maps, liquid crystals, weak anchoring}
\maketitle
\section{Introduction}
 Let $n\geq 3,$  $\Omega\subset\mathbb{R}^n$ a smooth bounded domain and $\mathcal{N}$ a smooth compact 
manifold. We are interested in the boundary regularity of minimizers of the family of weak anchoring energies defined for maps $u\in H^1(\Omega; \mathcal{N})$,
\begin{equation} \label{wae} 
E_{w}(u):=\int_{\Omega}\vert\nabla u\vert^2 dx + w\int_{\partial \Omega} g(x,u) d\mathcal{H}^{n-1},
\end{equation}
that arise in the study of liquid crystals \cite{klemanlavrentovich07}.    Above, $w\geq 0$ is referred to as the \textit{anchoring strength} while $g$, the \textit{anchoring energy density}, is a non-negative bounded function on $\partial\Omega\times\mathcal N$. The Euler-Lagrange equations satisfied by a minimizer $u$ of $E_w$ are
\begin{equation}\label{E.L}
\left\lbrace
\begin{array}{rcll}
-\Delta u & = A_{\mathcal{N}}(u)(\nabla u,\nabla u) & \quad \text{in }&\Omega,\\
\frac{1}{w} \frac{\partial u}{\partial \nu} & =\pi_{\mathcal{N}}(u) \nabla_{u} g (x,u)& \quad \text{on }&\partial\Omega,
\end{array}
\right.
\end{equation}
where $\nu$ is the outward unit normal to $\p \O$, $A_{\mathcal{N}}$ is the second fundamental form of $\mathcal{N}$ and $\pi_{\mathcal{N}}$ is the projection on the tangent space.  In the context of liquid crystals,  $n=3$ and the target manifold is $\mathcal N=\mathbb S^2$. The functional $E_w$ relaxes the physically unrealistic strong anchoring constraint
\begin{equation}\label{stronganchoring}
g(x,u(x))=0\quad\text{for a.e. }x\in\partial\Omega,
\end{equation}
which formally corresponds to $w=\infty$. A model case of anchoring density, though not the only one of physical interest,  is given by 
\begin{equation} \label{modelg} 
g(x,u)=\vert u -u_0(x)\vert^2,
\end{equation}
for some $u_0\colon\partial\Omega\to\mathcal N,$
which corresponds to Dirichlet boundary conditions in the strong anchoring limit.

Interior regularity for minimizers of $E_w$ follows directly from \cite{SUregularity}: the singular set has Hausdorff dimension at most $n-3$ and is discrete when $n=3$. On the other hand, boundary regularity does not seem to have been considered. In the liquid crystal setting, however, boundary defects have been discussed in \cite{klemanlavrentovich07}. The chief goal of this note is to address the question of optimal boundary regularity of minimizers of $E_w.$ We  tackle this question from two different perspectives: first we obtain an optimal bound on the dimension of the singular set of such maps valid for all values of $w,$ and then we take on a perturbation point of view to observe that boundary smoothness is a stable condition in $w.$

In what follows $\mathrm{Sing}(u)\subset \overline\Omega$ denotes the set of points where $u$ is not continuous while $\dim A$ corresponds to the Hausdorff dimension of a set $A\subset\mathbb R^n.$  As is natural, we extend the definition of $E_w$ to $w=\infty$ by setting $E_\infty(u)=+\infty$ if $u$ does not satisfy the strong anchoring constraint \eqref{stronganchoring}. Our main result is the following.
\begin{theorem}\label{A} Let $E_w$ be as in \eqref{wae}. The following holds about minimizers of $E_w$ in $H^1(\Omega;\mathcal N):$
\begin{itemize}
\item[] {\bf 1.} (Optimal boundary regularity for fixed anchoring strength)
 For any $w\in [0,\infty)$ and $u$ a minimizer of $E_w,$ 
\begin{equation*}
\begin{aligned}
&\dim (\mathrm{Sing}(u)\cap\partial\Omega)\leq n-4 &\quad \text{if }n\geq 4,\\
&\mathrm{Sing}(u)\cap\partial\Omega \text{ is discrete }&\quad\text{if }n=4,\\
&\mathrm{Sing}(u)\cap\partial\Omega=\emptyset &\quad\text{if }n=3.
\end{aligned}
\end{equation*}
\item[] {\bf 2.} (Stability with respect to the anchoring strength)
 Assume that for some $w_0\in [0,\infty]$, minimizers of $E_{w_0}$ have no boundary singularities, and in the case $w_0=\infty$ assume in addition that $\inf E_{w_0} <\infty$. Then, for $w$ in a neighborhood of $w_0$, minimizers of $E_w$ have no boundary singularities.
\end{itemize}
\end{theorem}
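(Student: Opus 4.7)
\textbf{Part 1 (boundary regularity).}
The key observation is the mismatched scaling: at $x_0\in\partial\Omega$, under $u_r(y):=u(x_0+ry)$, the Dirichlet energy on $\Omega\cap B_r(x_0)$ scales as $r^{n-2}$, while the anchoring term $w\int_{\partial\Omega\cap B_r(x_0)}g(x,u)\,d\mathcal{H}^{n-1}$ scales as $wr^{n-1}$. Testing minimality of $u$ against comparison maps supported on such boundary balls and using standard inner-variation identities, I would derive an almost-monotonicity
\begin{equation*}
\frac{d}{dr}\!\left(r^{2-n}\int_{\Omega\cap B_r(x_0)}|\nabla u|^2\,dx\right)\;\ge\;-C\,w\,\|g\|_\infty,
\end{equation*}
whose error is precisely the contribution of the anchoring term; this yields existence of the Dirichlet density at every boundary point and uniform $H^1_{\mathrm{loc}}$ bounds on the blow-up family.

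Next I extract a tangent map: along $r_k\to0$, $u_{r_k}\to u_\star$ strongly in $H^1_{\mathrm{loc}}(\R^n_+)$ via a Luckhaus-type projection argument adapted to the boundary (crucially, the anchoring energy of any competitor at scale $r_k$ carries an extra factor of $r_k$). Thus $u_\star\colon\R^n_+\to\mathcal N$ is $0$-homogeneous, energy-minimizing, and satisfies a \emph{zero Neumann} boundary condition on $\partial\R^n_+$. Even reflection then yields a $0$-homogeneous minimizer $\tilde u_\star\colon\R^n\to\mathcal N$ invariant under $x_n\mapsto-x_n$, to which classical Schoen--Uhlenbeck theory applies. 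Combining this with Federer dimension reduction on the boundary stratum $\mathrm{Sing}(u)\cap\partial\Omega$ delivers the claimed bounds: the reflection symmetry rules out singular cones of dimension $\ge n-3$ lying inside $\partial\R^n_+$, lowering the stratum dimension by one relative to the interior case and giving $n-4$ in general, discreteness at $n=4$, and emptiness at $n=3$.

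\textbf{Part 2 (stability).}
Let $w_k\to w_0\in[0,\infty]$ and let $u_k$ minimize $E_{w_k}$. Comparison against a fixed admissible competitor (a minimizer of $E_{w_0}$ when $w_0<\infty$, or any finite-energy map satisfying the strong anchoring constraint when $w_0=\infty$, which exists by the hypothesis $\inf E_{w_0}<\infty$) gives a uniform $H^1$ bound, so $u_k\rightharpoonup u_\infty$ weakly along a subsequence. A standard $\Gamma$-convergence argument identifies $u_\infty$ as a minimizer of $E_{w_0}$ and yields $E_{w_k}(u_k)\to E_{w_0}(u_\infty)$, which upgrades to strong $H^1$ convergence and convergence of boundary traces. (When $w_0=\infty$, the uniform bound $w_k\int_{\partial\Omega}g(\cdot,u_k)\le C$ forces $g(\cdot,u_\infty)=0$ in the limit.) Since $u_\infty$ is smooth near $\partial\Omega$ by hypothesis, for every $x_0\in\partial\Omega$ I may pick $r_0$ so small that the rescaled Dirichlet energy of $u_\infty$ on $B_{r_0}(x_0)\cap\Omega$ falls below the boundary $\varepsilon$-regularity threshold produced in Part 1; strong convergence transfers this smallness to $u_k$ for $k$ large, and a finite covering of $\partial\Omega$ concludes.

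\textbf{Main obstacle.}
The principal technical point is a \emph{uniform-in-$w$} boundary $\varepsilon$-regularity theorem, which must handle the regimes $w\in(0,\infty)$, $w=0$, and $w=\infty$ on equal footing. Strong $H^1$ convergence in the limit $w_k\to\infty$ is also delicate, since the strong anchoring constraint only emerges in the limit and one must rule out boundary concentration of energy.
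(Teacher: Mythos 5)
Your overall architecture for Part 1 (mismatched scaling of the two energy terms, almost-monotonicity with error $-Cw\|g\|_\infty$, Luckhaus-type compactness of blow-ups, tangent maps with zero Neumann condition) and essentially all of Part 2 (weak-then-strong $H^1$ convergence of $u_k$ to a minimizer of $E_{w_0}$, transfer of the small-energy condition, the $w_0=\infty$ case via $w_k\int_{\partial\Omega}g(\cdot,u_k)\le C$) match the paper. However, there is a genuine gap in the final step of Part 1. You propose to handle the tangent maps by even reflection across $\partial\R^n_+$ and then to apply \emph{classical interior} Schoen--Uhlenbeck theory to the reflected map $\tilde u_\star$. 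This does not work: the even reflection of a free-boundary minimizer is energy-minimizing only with respect to \emph{reflection-symmetric} competitors, not arbitrary ones, so the interior $\varepsilon$-regularity and dimension-reduction theory for minimizers does not apply to $\tilde u_\star$. (One could view $\tilde u_\star$ as stationary, but stationarity gives far weaker partial regularity and only for special targets.) This is precisely why the paper instead invokes Hardt--Lin's theory of partially constrained (free-boundary) minimizing tangent maps, where the analogue of the Schoen--Uhlenbeck program is carried out for this boundary condition.

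The second, related gap is the dimension count. Even granting regularity theory for $\tilde u_\star$, interior dimension reduction would only give $n-3$; your claim that ``reflection symmetry rules out singular cones of dimension $\ge n-3$ lying inside $\partial\R^n_+$'' is asserted, not proved, and it is exactly the hard point. The improvement to $n-4$ (hence discreteness for $n=4$ and emptiness for $n=3$) rests on showing that every $0$-homogeneous minimizing free-boundary tangent map on $\R^3_+$ is constant; this is a theorem of Hardt and Lin, not a formal consequence of symmetry. As written, your argument would at best recover $\dim(\mathrm{Sing}(u)\cap\partial\Omega)\le n-3$, which is weaker than the stated result and in particular does not yield full boundary regularity in the physical dimension $n=3$. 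To close the gap you must either cite the Hardt--Lin classification of boundary tangent maps or reprove it; everything upstream of that point in your sketch is consistent with the paper.
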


\hspace{0.5cm}

Let us note that, somewhat surprisingly, in the case of the physical dimension $n=3,$ the first part of Theorem~\ref{A} gives full regularity at the boundary which is in strong contrast with physical observations \cite{klemanlavrentovich07}. At the same time, the second part of the theorem implies in particular that minimizers of $E_w$ have no boundary singularities for $w$ close to zero (weak anchoring case), 
since minimizers of the Dirichlet energy with Neumann boundary conditions are constants.  In the case of extreme anchoring (that is when $w$ is large)  and for $g$ of the form \eqref{modelg} with a smooth $u_0$, minimizers of $E_w$ have no boundary singularities, again as a consequence of the second part of Theorem \ref{A} because minimizing harmonic maps with smooth Dirichlet conditions are smooth near the boundary \cite{SUboundaryregularity}.


\section{Proof of Theorem \ref{A}}

The proof of the first part of Theorem \ref{A} follows the classical scheme for regularity of harmonic maps \cite{SUregularity}, which relies on the study of tangent maps.
Let $x_0\in \partial\Omega$ and $r>0.$ Defining  $\hat{u}(x):=u(x_0+rx),$ we have
 \begin{equation*}
\left\lbrace
\begin{aligned}
-\Delta \hat{u} & = A_{\mathcal{N}}( \hat{u})(\nabla  \hat{u},\nabla  \hat{u}) & \quad\text{in }&\frac{1}{r}\left( \O \setminus{x_0} \right),\\
\frac{1}{w} \frac{\partial  \hat{u}}{\partial \nu} & =r \pi_{\mathcal{N}}( \hat{u}) \nabla_{ \hat{u}} g (x, \hat{u})& \quad\text{on }&\partial \left[\frac{1}{r}\left( \O \setminus{x_0} \right) \right],
\end{aligned}
\right.
\end{equation*}
Since $\pi_{\mathcal{N}}( \hat{u}) \nabla_{ \hat{u}} g (x, \hat{u})$ is bounded, taking the formal limit  $r \rightarrow 0$ yields a map $\phi$ satisfying
\begin{equation*}
\left\lbrace
\begin{aligned}
-\Delta \phi & = A_{\mathcal{N}}(\phi)(\nabla \phi,\nabla \phi) & \quad\text{in }&\mathbb{R}^n_+,\\
 \frac{\partial \phi}{\partial \nu} & = 0& \quad\text{on }& \mathbb{R}^{n-1}\times \{0\}.
\end{aligned}
\right.
\end{equation*}
 Such maps, when they are 
 $0$-homogeneous and locally minimizing, are called \textit{free-boundary minimizing tangent maps} and have been studied by Hardt and Lin in \cite{HLpartially}. 
 They discovered that their singular set has Hausdorff dimension at most $n-4$ at the boundary. 
 This result allows to conclude, provided we adapt the techniques developed in \cite{SUregularity} to our case. 
 An essential ingredient in \cite{SUregularity} is the energy monotonicity formula, which -- together with a technical extension lemma -- ensures convergence of  blow-up sequences to tangent maps.  The key observation in our case is that the surface anchoring term in the energy \eqref{wae} scales differently from the Dirichlet energy whence an approximate monotonicity formula is still valid; moreover the surface anchoring term disappears after blow-up and thus our tangent maps are precisely the ones studied in \cite{HLpartially}, where the equivalent of Theorem~\ref{A} part 1. is proven.

\begin{proof}[Proof of Theorem~\ref{A} part 1] We denote by $B_r^+$ the half ball
\begin{equation*}
B_r^+ =\left\lbrace x\in\mathbb R^n \colon \vert x\vert<1,\, x_n>0\right\rbrace,
\end{equation*}
by $\Sigma_r$ the \enquote{flat} part of its boundary $\Sigma_r =  B_r^+\cap\lbrace x_n=0\rbrace$, and by $\Gamma_r$ the \enquote{round} part of its boundary $\Gamma_r=\partial B_r^+ \cap \lbrace x_n>0\rbrace$.
By locally flattening the boundary of $\Omega$, our problem can be reduced to studying maps minimizing an energy functional of the form
\begin{equation*}
\mathcal E_w(u)=\int_{B_1^+}\vert\nabla u\vert^2 +w\int_{\Sigma_1} g(x',u),
\end{equation*}
among maps $u\in H^1(B_1^+;\mathcal N)$ with fixed boundary values on $\Gamma_r$. Here $g$ is a bounded non-negative function on $\Sigma_1\times \mathcal N$, and we study regularity of minimizers on $\Sigma_1$. It is important to remark that the two terms in the above energy scale differently: setting $u_r(x)=u(rx)$, it holds
\begin{equation}\label{scale}
r^{n-2}\mathcal E_w(u_r)=\int_{B_r^+} \vert \nabla u \vert^2 + r \int_{\Sigma_r} g(x'/r,u).
\end{equation}
A first consequence of \eqref{scale} is that \enquote{small energy regularity} holds for minimizers of $\mathcal E_w$: there exist $r_0$ and $\varepsilon_0$ (depending on $n$, $w$ and $\sup g$) such that for $r<r_0$,
\begin{equation}\label{smallreg}
r^{2-n}\int_{B_r^+} \vert \nabla u\vert^2 <\varepsilon_0^2 \quad \Longrightarrow \quad u\text{ is continuous in }\overline {B_{r/2}^+}.
\end{equation}
This can be proved arguing by contradiction as in \cite[Proposition~1]{luckhaus88}. An essential step there is to construct good comparison maps, which is done with the help of an important extension lemma. Our setting is slightly different since we are dealing with maps defined on half balls, but after extending by reflection, the proof carries over. 

Comparison with rescaled homogeneous maps as in 
\cite[\S 2]{SUregularity} implies the following monotonicity formula: for some $c>0$ depending only on $n$, $w$ and $\sup g$,
\begin{equation}\label{monot}
\frac{d}{dr}\left[r^{2-n}\int_{B_r^+}\vert\nabla u\vert^2 \right]\geq -c +  r^{2-n}\int_{\Gamma_r}\left\vert \frac{\partial u}{\partial r}\right\vert^2.
\end{equation}
Together with the construction of good comparison maps, this monotonicity formula implies, following \cite[Proposition~2]{luckhaus88} and taking \eqref{scale} into account, the strong $H^1$ convergence of blow-up subsequences $u_{x_0,r_i}(x)=u(x_0+ r_i x)$ for any $x_0\in\Sigma_1$. The limits, called \textit{tangent maps}, are homogeneous $\mathcal N$-valued maps  defined in the half-space $\lbrace x_n>0\rbrace$. Moreover tangent maps minimize the Dirichlet energy $\mathcal E_0$ with \emph{free boundary conditions} on 
 $\Sigma_1$ (and fixed boundary values on $\Gamma_1$). Therefore the proof can be continued exactly as in \cite[Theorem~2.8]{HLpartially}.
\end{proof}

\begin{remark}\label{rem:density}
The Dirichlet energy of a tangent map at $x_0\in\overline\Omega$ equals the \emph{density function}
\begin{equation}\label{theta}
\Theta(u,x_0)=\lim_{r\to 0} \left[ r^{2-n}\!\int_{B_r(x_0)\cap\Omega}\vert\nabla u\vert^2\right].
\end{equation}
The limit exists thanks to the monotonicity formula \eqref{monot}. The small energy regularity property \eqref{smallreg} amounts to
\begin{equation}\label{smalltheta}
\Theta(u,x_0)<\varepsilon_0 \quad\Longrightarrow\quad u\text{ is continuous at }x_0,
\end{equation}
and $\varepsilon_0$  can be \emph{a posteriori} taken as the infimum of the Dirichlet energy over all non constant tangent maps. In particular, $\varepsilon_0$ in \eqref{smalltheta} is \emph{independent} of $w$ and $\sup g$, which was \emph{a priori} not obvious.
\end{remark}
Theorem~\ref{A} part 2. follows from the strong $H^1$ convergence of minimizers of $E_w$ to minimizers of $E_{w_0}$ as $w\to w_0$.
\begin{proof}[Proof of Theorem~\ref{A} part 2] Were the result not true, there would exist a sequence $w_k\to w_0$ and maps $u_k$ minimizing $E_{w_k}$, with singularities at $x_k\to x_0\in\partial\Omega$. By Remark~\ref{rem:density} above, this implies in particular $\Theta(u_k,x_k)\geq \varepsilon_0$. 

We may also assume that $u_k$ converges weakly in $H^1$ to a $\mathcal N$-valued map $u_0$. The convergence is in fact strong, and  $u_0$ minimizes $E_{w_0}$: this follows from the inequalities
\begin{equation*}
E_{w_0}(u_0)\leq \liminf E_{w_k}(u_k) \leq \liminf E_{w_k}(u) = E_{w_0}(u),\qquad\forall u\in H^1(\Omega ;\mathcal N).
\end{equation*}
By assumption, $u_0$ has no boundary singularities. On the other hand it holds $\Theta(u_0, x_0)\geq \varepsilon_0$ since the density function is upper-semicontinuous \cite[Proposition 10.26]{GMregularitytheory}. This contradiction completes the proof.
\end{proof}
\section{Future directions}

The proof of Theorem \ref{A} is really specific to minimizing maps. A natural question is then if it can be extended to also consider stationary harmonic maps. Another line of investigation is more directly linked to the  harmonic map depiction of liquid crystals, which can be seen as the London limit of a more general model based on  $Q$-tensors \cite{majumdarzarnescu10}: in the case of weak anchoring, does the convergence of minimizing $Q$-tensors hold up to the boundary? 
Finally, the upper bound in Theorem \ref{A} part {\bf 1.} is very general and valid for any $w$ and any function $g$. It would be interesting to see if this bound can be improved incorporating the dependence on the anchoring strength and the map $g$. This would require much finer analysis.




\section*{Acknowledgements}
A. Contreras and R. Rodiac would like to thank the Institut Camille Jordan, Universit\'e de Lyon-1, where this work was carried out, for the hospitality and a great atmosphere. 

\bibliographystyle{plain}
\bibliography{biblioregharmonic}

\end{document}